\documentclass{amsart}
\usepackage[american]{babel}
\usepackage{amssymb,amsmath,mathrsfs}
\usepackage{nicefrac}
\usepackage{hyperref}
\usepackage[normalem]{ulem}
\hypersetup{
  colorlinks  = true, 
  urlcolor    = black, 
  linkcolor   = black, 
  citecolor   = black 
}


\newcommand{\FF}{\mathbb{F}}
\newcommand{\PP}{\mathbb{P}}

\newcommand{\C}{\mathcal{C}}
\newcommand{\TT}{\mathbb{T}}
\newcommand{\bx}{\mathbf{x}}
\newcommand{\bz}{\mathbf{z}}
\newcommand{\bw}{\mathbf{w}}
\newcommand{\reg}{\operatorname{reg}}

\newcommand{\ev}{\operatorname{ev}}
\newcommand{\set}[1]{\left \{ #1 \right \}}

\newcommand{\ts}{\textstyle}

\newtheorem{theorem}{Theorem}[section]
\newtheorem{lemma}[theorem]{Lemma}

\newtheorem{prop}[theorem]{Proposition}
\newtheorem*{claim*}{Claim}

\theoremstyle{definition}
\newtheorem{definition}[theorem]{Definition}

\newtheorem{example}[theorem]{Example}

\begin{document} 

\title[The minimum distance of a parameterized code over an even cycle]%
{The minimum distance of a parameterized\\ code over an even cycle}

\author{Eduardo Camps-Moreno}
\address[Eduardo Camps-Moreno]{Department of Mathematics, Virginia Tech, Blacksburg, VA
USA}
\email{e.camps@vt.edu}

\author{Jorge Neves}
\address[Jorge Neves]{University of Coimbra, Department of Mathematics, CMUC, 3000-143 Coimbra, Portugal}
\email{neves@mat.uc.pt}

\author{Eliseo Sarmiento}
\address[Eliseo Sarmiento]{Escuela Superior de F\'isica y Matem\'aticas \\ Instituto Polit\'ecnico Nacional\\ Mexico City, Mexico}
\email{esarmiento@ipn.mx}

\thanks{Partially supported by the Centre for Mathematics of the University of Coimbra (funded by the Portuguese Government through FCT/MCTES, DOI 10.54499/UIDB/00324/2020). The authors use Macaulay2 \cite{M2} in the computations of examples.}

\thanks{\emph{Data availability}. Data sharing not applicable to this article as no datasets were
generated or analyzed during the current study.}

\makeatletter
\@namedef{subjclassname@2020}{%
  \textup{2020} Mathematics Subject Classification}
\makeatother

\keywords{minimum distance, parameterized linear codes}
\subjclass[2020]{13P25, 14G50, 14G15, 11T71, 94B27}

\begin{abstract}
We compute the minimum distance of the parameterized code of order $1$ over an even cycle.
\end{abstract}
\maketitle

\section{Introduction}
Defined by Renter\'ia, Simis and Villarreal in \cite{ReSiVi11}, and by Gonz\'alez and Renter\'ia in \cite{GoRe07}, 
para\-me\-te\-rized linear codes are evaluation codes over a projective toric subset
parameterized by a set of Laurent monomials, over a finite field $\FF$.
From the start, the case when the exponent vectors of the Laurent monomials 
form the incidence matrix of a graph is a case of major interest. Let us explain this in detail. 
Let $G$ be a simple graph, with vertex set $V_G=\set{1,2,\dots,n}$ and $E_G=\set{e_1,\dots,e_s}$. 
Fix a finite field, $\FF$, of order $q$ and,
for any $\bx=(x_1,\dots,x_n) \in (\FF^*)^n$ and any edge $e_k=\set{i,j}\in E_G$,
denote $e_k(\bx)=x_ix_j$. Denote the subset of $\PP^{s-1}$ consisting 
of the points with nonzero coordinates by $\TT^{s-1}$.

\begin{definition}\label{def: the set of points}
The \emph{projective toric subset 
parameterized by $G$} is
$$
X = \set{(e_1(\bx):\dots : e_s(\bx))\in \PP^{s-1} : \bx \in (\FF^*)^n}\subseteq \TT^{s-1}.
$$
\end{definition}
To avoid a the trivial case, we assume throughout this article that 
$|\FF|=q>2$.
Denoting by $S=\FF[t_1,\dots,t_s]$ the polynomial ring with variables in bijection with 
$E_G$ and coefficients in $\FF$ and choosing an ordering of the points of 
$X = \set{P_1,\dots,P_m}$, define, for each $d\geq 0$, a linear 
map $\ev_d\colon S_d \to \FF^m$ given by
$$
\ts f\mapsto \left (\frac{f}{t_1^d}(P_1),\dots,\frac{f}{t_1^d}(P_m)\right ). 
$$
\begin{definition}\label{def: the codes}
The image of $\ev_d$ is called the parameterized code of order $d$ over $G$ and $\FF$. We denote 
these linear subspaces by $C_X(d)\subseteq \FF^m$. 
\end{definition}

Up to linear equivalence, the codes $C_X(d)$ do not depend on the choice 
of ordering of the edges of graph nor on the ordering of the points of $X$. 
\medskip

Denote the vanishing ideal of $X$ by $I(X)\subseteq S=\FF[t_1,\dots,t_s]$.
It is by now well-known that the properties of the family of evaluation codes
given by $C_X(d)$, as $d\geq 0$, tie into the algebraic properties of the quotient $S/I(X)$. 
The three basic parameters of a code $C\subseteq \FF^m$ are the length ($m$, the 
dimension of their ambient space), the dimension and the minimum distance, \emph{i.e.}, 
$$
\min\set{\|\bw\| : 0\not =\bw \in C }.
$$
The length of $C_X(d)$, $m=|X|$, which is constant as $d\geq 0$, can be given by the multiplicity degree of $S/I(X)$.
An explicit formula for this parameter is known. If $b_0(G)$ denotes the number of connected components of $G$
and $\gamma$ is the number of those that are non-bipartite, then 
\begin{equation}\label{eq: length}
\renewcommand{\arraystretch}{1.3}
m = |X| = \deg S/I(X) = 
\left \{
\begin{array}{ll}
\frac{1}{2^{\gamma-1}}(q-1)^{n-m+\gamma-1} & \text{if $\gamma\geq 1$ and $q$ is odd,} \\ 
(q-1)^{n-m+\gamma-1} & \text{if $\gamma\geq 1$ and $q$ is even,} \\
(q-1)^{n-m-1}, & \text{if $\gamma=0$}\\
\end{array}
\right. 
\end{equation}
(\emph{cf}.~\cite[Corollary~3.8]{ReSiVi11} and \cite[Theorem 3.2]{NeVPVi15}). 
Concerning the parameter dimension, from Definition~\ref{def: the codes}, it follows that 
$$
\dim_\FF C_X(d) = \dim_\FF \left ( S/I(X) \right )_d.
$$
In other words the dimension function of $C_X(d)$  
coincides with the Hilbert function of $S/I(X)$. Unlike the case of the length, 
no formula holding for a general $G$ is known for the dimension function. 
We know of formulas for the dimension function 
only in the following cases: \emph{i}) when $X$ coincides with the torus $\TT^{s-1}$, \emph{i.e.}, 
when $G$ is either a forest or a non-bipartite unicyclic graph
\cite[Proposition~2.2]{DuReTR01}; \emph{ii}) when $G$ is a complete bipartite graph 
\cite[Theorem~5.2]{GR08}; \emph{iii}) when $G$ is an even cycle and $\FF$ is a field 
with $q=3$ elements --- \emph{cf}.~\cite[Theorem~2.8]{NeVP23}.
On the other hand, the behaviour of the Hilbert function of $S/I(X)$ also yields another 
important piece of information about the family of linear codes $C_X(d)$. 
Since $S/I(X)$ is a Cohen--Macaulay, dimension $1$ graded ring, we know that 
the Hilbert function eventually becomes constant and equal to $m$. This means 
that for a certain value of $d$ onward the codes $C_d(X)$ are trivial, as
they coincide with their ambient space. This value of $d$ is the regularity index 
of $S/I(X)$ or, in this case equivalently, the Castelnuovo--Mumford regularity 
of $S/I(X)$. For the past years, several authors have focused on the computation of the 
regularity of $S/I(X)$ ---
\emph{cf.}~\cite{GR08, GoReSa13, NeVP14,NeVPVi15,Ne20,MaNeVPVi20}. 
When $X$ is equal to $\TT^{s-1}$, $I(X)$ is equal to the complete intersection 
$$
(t_1^{q-1}-t_s^{q-1},\dots,t_{s-1}^{q-1}-t_s^{q-1})
$$
and, accordingly, $\reg S/I(X) = (s-1)(q-2)$ --- \emph{cf}.~\cite{GRH03}.
If $G$ is bipartite, $2$-connected and affords a nested ear decomposition, then the
regularity is equal to 
$$
\ts \frac{n+\epsilon-3}{2}(q-2),
$$
where $\epsilon$ denotes the number of even ears --- \emph{cf}.~\cite[Theorem~4.4]{Ne20}. 
If $q= 3$, the regularity is equal to the maximum cardinality of a subset of 
edges that contains no more than half the edges of any Eulerian, 
even cardinality, subgraph of $G$ --- \emph{cf}.~\cite[Theorem~4.13]{Ne23}.
The regularity is known for a few more special families of graphs such as 
complete graphs and complete bipartite graphs ---
\emph{cf}.~\cite{GoReSa13} and \cite{GR08}, respectively --- however, so far, 
we know of no formula for the regularity of $S/I(X)$ that holds 
for a general graph and $q>3$.

The minimum distance of $C_X(d)$ has remained unknown, for most cases. 
When $X$ coincides with the torus and $d<\reg S/I(X) =(q-1)(s-1)$, we know that the minimum distance of $C_X(d)$ is equal to 
$$
(q-1)^{s-(k+2)}(q-1-\ell)
$$
where $k$ and $\ell$ are such that $d=k(q-2)+\ell$, $k\geq 0$ and $1\leq \ell \leq q-2$ --- \emph{cf}.~\cite[Theorem~3.5]{SVPV11}.
This formula can be used to express the minimum distance for the case when $G$ 
is a complete bipartite graph, \emph{cf}.~\cite{GR08}. For no other classes of graphs do we know a formula for the minimum distance. 
A complete answer to this question seems, at present, a very difficult problem. 

The purpose  of this article is to give a formula for the minimum distance of $C_X(1)$ when $G$ is an even cycle. 
This result is significant not only because it provides a closed formula for an unknown parameter, 
but also because it lays the groundwork for exploring minimum distances across a wider 
range of graph structures. In the next section we describe the corresponding set $X$ and linear code $C$, 
together with the length and dimension of the latter. In Section~\ref{sec: the result}, we state and prove 
our main result, Theorem~\ref{thm: the result}.

\section{Preliminaries}

\subsection{The set of points} Let $G=\C_{2k}$ be an even cycle of length $2k\geq 4$. 
The toric subset parameterized by 
$\C_{2k}$ is given by: 
\begin{equation}\label{eq: the set}
X = \set{(x_1x_2:x_2x_3: \dots:x_{2k-1}x_{2k}:x_{2k}x_1) \in \PP^{2k-1} : x_i \in \FF^*, 1\leq i\leq 2k}.
\end{equation}
Using \eqref{eq: length} we deduce that $X$ has cardinality equal to $m=(q-1)^{2k-2}$.

\subsection{The code}
Recalling that $S=\FF[t_1,\dots,t_{2k}]$, the code $C_X(1)$ 
is the image of the map $\ev_1\colon S_1 \to \FF^m$ given by 
$$
\ts t_i\mapsto \left (\frac{t_i}{t_1}(P_1),\dots,\frac{t_i}{t_1}(P_m)\right ).
$$
for $i=1,\dots,2k$. For the remainder of this article we will denote this code by $C$. 
Since by \cite[Theorem 5.9]{NeVPVi15}, the vanishing ideal of $X$ is generated
by binomials of degree $\geq 2$, we deduce that 
$$
\dim_\FF C = \dim_\FF (S/I(X))_1 = \dim_\FF S_1 = 2k.
$$
Hence $C$ is a code of length 
$m=(q-1)^{2k-2}$ and dimension $2k$.

\subsection{Number of zeros of forms}
The set $X$ is the image of a map 
$\TT^{2k-1} \to \TT^{2k-1}$ that sends $(x_1:\cdots :x_{2k})$ to 
$(x_1x_2:\cdots : x_{2k-1}x_{2k}:x_{2k}x_1)$. This is a group homomorphism 
which, according to \cite[Lemma 3.1]{NeVPVi15},
has kernel of cardinality $q-1$. Hence, given a homogeneous polynomial $F\in \FF[t_1,\dots,t_{2k}]$, 
the computation of the cardinality of $Z(F)\cap X$ can be achieved by counting 
the number of zeros of 
$$
f = F(x_1x_2,\dots,x_{2k-1}x_{2k}, x_{2k}x_1) \in \FF[x_1,\dots,x_n]
$$
on $\TT^{2k-1}$ and dividing this number by $(q-1)$. Similarly, the cardinality 
of $Z(f)\cap \TT^{2k-1}$ is equal to the number of $(a_1,\dots,a_{2k})\in (\FF^*)^{2k}$
such that $f(a_1,\dots,a_{2k})=0$, divided by $q-1$.

\begin{definition}
Given $f_1,\dots,f_\ell\in \FF[x_1,\dots,x_n]$, denote by $\bz(f_1,\dots,f_\ell)$ the number of 
$(a_1,\dots,a_{2k})\in (\FF^*)^{2k}$ such that 
$$
f_1(a_1,\dots,a_{2k}) = \cdots = f_\ell(a_1,\dots,a_{2k}) = 0. 
$$
\end{definition}

By the previous discussion we get following lemma. 

\begin{lemma}\label{lemma: zeros affine}
Given $F\in S$, if  $f=F(x_1x_2,\dots,x_{2k-1}x_{2k},x_{2k}x_1)\in \FF[x_1,\dots,x_n]$,
the cardinality of $Z(F)\cap X$ is $\bz(f)/(q-1)^2$.
\end{lemma}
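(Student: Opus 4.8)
The plan is to chain together the two reductions described in the paragraph preceding the statement. First I would record the identity
\[
Z(F)\cap X \;=\; \set{P\in X : F(P)=0},
\]
and observe that, since $X$ is the image of the group homomorphism $\phi\colon \TT^{2k-1}\to\TT^{2k-1}$ sending $(x_1:\cdots:x_{2k})$ to $(x_1x_2:\cdots:x_{2k-1}x_{2k}:x_{2k}x_1)$, a point $P\in X$ lies in $Z(F)$ exactly when every preimage $(x_1:\cdots:x_{2k})\in\phi^{-1}(P)$ satisfies $F(x_1x_2,\dots,x_{2k}x_1)=0$; the latter polynomial is precisely $f$. Hence $\phi$ restricts to a surjection from $Z(f)\cap\TT^{2k-1}$ onto $Z(F)\cap X$. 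By \cite[Lemma 3.1]{NeVPVi15} the kernel of $\phi$ has cardinality $q-1$, so every fibre of $\phi$ has exactly $q-1$ elements, and therefore
\[
|Z(F)\cap X| \;=\; \frac{|Z(f)\cap\TT^{2k-1}|}{q-1}.
\]

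Next I would pass from the projective count $|Z(f)\cap\TT^{2k-1}|$ to the affine count $\bz(f)$. A projective point of $\TT^{2k-1}$ is a line through the origin all of whose coordinates are nonzero, and each such line meets $(\FF^*)^{2k}$ in exactly $q-1$ points (the nonzero scalar multiples of any chosen representative). Since $f$ arises as $F(x_1x_2,\dots,x_{2k}x_1)$ with $F$ homogeneous, $f$ is itself homogeneous — of degree $2\deg F$ — so the condition $f(a_1,\dots,a_{2k})=0$ is invariant under this scaling. Consequently the $q-1$ affine representatives of a projective zero are either all zeros of $f$ or none of them are, which gives
\[
\bz(f) \;=\; (q-1)\,\bigl|Z(f)\cap\TT^{2k-1}\bigr|.
\]
Combining the two displayed equalities yields $|Z(F)\cap X| = \bz(f)/(q-1)^2$, as claimed.

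There is no substantial obstacle here: the statement is essentially a bookkeeping consequence of the two $(q-1)$-to-$1$ correspondences, one coming from the kernel of the parameterizing homomorphism and one from the passage between projective and affine coordinates. The only point that warrants a line of care is the well-definedness of the surjection $Z(f)\cap\TT^{2k-1}\to Z(F)\cap X$, i.e. checking that $F$ vanishing at $\phi(x)$ is genuinely equivalent to $f$ vanishing at $x$ rather than merely implied in one direction; this follows because $\phi$ is a surjective homomorphism onto $X$, so every point of $X$ is hit and the equivalence propagates along fibres.
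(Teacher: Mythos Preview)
Your proposal is correct and is precisely the argument the paper intends: the lemma is stated there as an immediate consequence of the preceding discussion, which records exactly the two $(q-1)$-to-$1$ reductions (kernel of $\phi$, then projective-to-affine) that you spell out. Your added remark that $f$ is homogeneous of degree $2\deg F$, ensuring the affine vanishing is scale-invariant, is the one detail the paper leaves implicit.
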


\section{The result}\label{sec: the result}

\begin{theorem}\label{thm: the result}
Let $X\subseteq \PP^{2k-1}$ be the toric subset parameterized by a cycle of length $2k\geq 4$, 
let $m=|X|=(q-1)^{2k-2}$, and let $C=C_X(1)\subseteq \FF^{m}$ be the corresponding evaluation code of order $1$.  
Then the minimum distance of $C$ is
$$
\renewcommand{\arraystretch}{1.75}
\left \{
\begin{array}{ll}
(q-1)^{2k-3}(q-2), & \text{if $k=4$ and $q> 3$, or $k\geq 5$,}\\
\frac{(q-1)^{2k}-q^k(q-2)-1}{q(q-1)}, & \text{if $k=2$, or $k=3$, or $k=4$ and $q=3$}.
\end{array}
\right.
$$
\end{theorem}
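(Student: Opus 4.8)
The plan is to compute the minimum distance by finding, for each codeword, the number of zero coordinates and then minimizing over nonzero codewords. By Lemma~\ref{lemma: zeros affine}, a codeword of $C$ comes from a linear form $F = a_1 t_1 + \cdots + a_{2k} t_{2k} \in S_1$, its weight is $m$ minus the number of zeros of $F$ on $X$, and that number of zeros equals $\bz(f)/(q-1)^2$ where
\[
f = a_1 x_1 x_2 + a_2 x_2 x_3 + \cdots + a_{2k-1} x_{2k-1} x_{2k} + a_{2k} x_{2k} x_1 \in \FF[x_1,\dots,x_{2k}].
\]
So the whole problem reduces to: among all nonzero such ``cyclic quadratic forms'' $f$, maximize the number $\bz(f)$ of zeros in $(\FF^*)^{2k}$; the minimum distance is then $m - \max_f \bz(f)/(q-1)^2$. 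The first step is therefore to set up this dictionary carefully and reduce to a purely combinatorial/counting question about $\bz(f)$.

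Next I would analyze $\bz(f)$ by exploiting the cycle structure. The key observation is that if we fix $x_2, x_4, \dots, x_{2k}$ (the even-indexed variables), then $f$ becomes a \emph{linear} form in the odd-indexed variables $x_1, x_3, \dots, x_{2k-1}$, because each monomial involves exactly one odd and one even variable; in fact $x_{2j-1}$ appears only in the two consecutive terms $a_{2j-2}x_{2j-2}x_{2j-1}$ and $a_{2j-1}x_{2j-1}x_{2j}$ (indices mod $2k$), so $f = \sum_j x_{2j-1}(a_{2j-2}x_{2j-2} + a_{2j-1}x_{2j})$. For a fixed choice of the even variables in $(\FF^*)^k$, the odd variables then range over solutions in $(\FF^*)^k$ of a linear system whose coefficients $c_j := a_{2j-2}x_{2j-2} + a_{2j-1}x_{2j}$ are determined; the number of such solutions depends only on how many of the $c_j$ vanish. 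Counting points of $(\FF^*)^k$ on a hyperplane (and on a coordinate subspace defined by the vanishing $c_j$) gives a clean formula. Summing over the even variables, $\bz(f)$ becomes a sum over the even-variable torus weighted by the pattern of which $c_j = 0$, and one is left to maximize, over the choice of the $a_i$, the number of even-variable tuples making many $c_j$ simultaneously zero — this is where the two cases $k$ small versus $k$ large, and the role of $q=3$, will emerge, since making many $c_j = 0$ forces relations among the $a_i$ and the achievable configurations are constrained by whether a ``matching''-type structure on the cycle can be completed.

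I would then split into the generic/large case and the exceptional case. For $k \geq 5$ (and $k=4$, $q>3$), I expect the optimal $f$ to essentially depend on a single variable — e.g. $f$ proportional to a form supported so that the induced linear form in the odd variables is a single monomial for every choice of even variables — yielding $\bz(f) = (q-1)^{2k-1}(q-2)$ and hence distance $(q-1)^{2k-3}(q-2)$; one must prove this is optimal by showing any attempt to do better (by arranging extra coincidental vanishing) costs more than it gains. For $k=2,3$ and $k=4,q=3$, the exceptional formula $\frac{(q-1)^{2k}-q^k(q-2)-1}{q(q-1)}$ should come from a genuinely better configuration — I would guess the form $f = x_1x_2 + x_2x_3 + \cdots + x_{2k}x_1$ itself, or a small perturbation, whose zero count can be evaluated exactly (this is a known-type computation: zeros of the ``all-ones'' cyclic form, related to the permanent-like recursion around the cycle, producing the $q^k$ term via a transfer-matrix/eigenvalue computation on the $2\times 2$ transfer matrix of the cycle). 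The main obstacle will be the optimality argument: proving that no linear form $F$ gives more zeros than the claimed maximum. This requires a uniform upper bound on $\bz(f)$ over all nonzero cyclic quadratic forms, and I anticipate handling it by the fixed-even-variables reduction above together with a careful case analysis on the support of $F$ and the number of simultaneously-vanishing $c_j$, using the small-$k$ arithmetic to see why the cycle closes up differently when $k \le 4$.
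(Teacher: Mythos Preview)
Your even/odd bipartition is a legitimate and genuinely different route from the paper's. The paper instead splits on the \emph{support} of $F$: if some coefficient $\alpha_i$ vanishes, a two-line argument (isolate a variable appearing in a single term) gives $|Z(F)\cap X|\le (q-1)^{2k-3}$, with equality at $F=t_1-t_2$; if all $\alpha_i$ are nonzero, the paper proves the bound $\frac{(q-1)^{2k-1}+q^k(q-2)+1}{q(q-1)}$ by induction on $k$, writing $f=g+hx_{2k+2}$ and reducing $\C_{2k+2}$ to $\C_{2k}$. The theorem then follows from the numerical comparison of these two values. Your bipartition does recover the paper's equality condition $\alpha_1\alpha_3\cdots\alpha_{2k-1}=(-1)^k\alpha_2\alpha_4\cdots\alpha_{2k}$ --- this is precisely when all your $c_j$ can vanish simultaneously on the $k$-cycle of even variables --- but turning that observation into a sharp upper bound requires an inclusion--exclusion over vanishing patterns of the $c_j$ around a cycle, which is heavier than the paper's one-variable-at-a-time recursion.

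There are also concrete slips. In your large-$k$ case the optimal value is $\bz(f)=(q-1)^{2k-1}$, not $(q-1)^{2k-1}(q-2)$; the extremal form is $F=t_1-t_2$, giving $f=x_2(x_1-x_3)$, which in your framework has exactly two nonzero $c_j$, not one, and the weight $(q-1)^{2k-2}-(q-1)^{2k-3}=(q-1)^{2k-3}(q-2)$ follows. Second, the all-ones form does not always realize the full-support maximum: the equality condition above fails for odd $k$ (e.g.\ $k=3$), so your exceptional-case candidate needs adjusting. Most importantly, you leave the optimality bound as an acknowledged ``main obstacle'' with only a heuristic; the idea you are missing is exactly the paper's dichotomy --- once the incomplete-support case is peeled off trivially, the full-support case admits a clean induction with an explicit extremal characterization, and no transfer-matrix or pattern-by-pattern case analysis is needed.
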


\medskip

Since $C$ is the subspace of $\FF^m$ obtained by evaluating forms of degree one at the points of 
$X$, to prove Theorem~\ref{thm: the result}, we need to show that
$$
\max_{F\in S_1} |Z(F)\cap X| = 
\renewcommand{\arraystretch}{1.75}
\left \{
\begin{array}{ll}
(q-1)^{2k-3}, & \text{if $k=4$ and $q> 3$, or $k\geq 5$,}\\
\frac{(q-1)^{2k-1}+q^k(q-2)+1}{q(q-1)}, & \text{if $k=2$, or $k=3$, or $k=4$ and $q=3$}.
\end{array}
\right.
$$
To estimate $|Z(F)\cap X|$, with $F\in S_1$, we will distinguish between 
two cases, corresponding to Propositions~\ref{prop: zeros of incomplete linear form} and 
\ref{prop: zeros of a complete linear form}, below.

\begin{prop}\label{prop: zeros of incomplete linear form}
If $F= \alpha_1t_1+\cdots+\alpha_{2k}t_{2k} \in \FF[t_1,\dots,t_s]$ is a nonzero linear form  
such that $\alpha_i=0$, for some $i$, then 
$$
|Z(F)\cap X|\leq  (q-1)^{2k-3}
$$
and equality is attained, for example, for $F=t_1-t_2$.
\end{prop}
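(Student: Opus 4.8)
My plan is to translate the problem, via Lemma~\ref{lemma: zeros affine}, into counting the $(a_1,\dots,a_{2k})\in(\FF^*)^{2k}$ annihilated by the pullback
$$
f = \sum_{i=1}^{2k}\alpha_i\, a_i a_{i+1},
$$
with indices read cyclically modulo $2k$ (so $a_{2k+1}=a_1$), and then show $\bz(f)\le (q-1)^{2k-1}$, which upon dividing by $(q-1)^2$ gives the claimed bound $(q-1)^{2k-3}$. After a cyclic relabeling of the vertices of the cycle we may assume the missing coefficient is $\alpha_{2k}=0$, so the cyclic term $a_{2k}a_1$ disappears and
$$
f = \alpha_1 a_1a_2 + \alpha_2 a_2a_3 + \cdots + \alpha_{2k-1}a_{2k-1}a_{2k}
$$
is now a polynomial \emph{linear in $a_1$} (the variable $a_1$ appears only in the single monomial $\alpha_1 a_1 a_2$, since $a_{2k}a_1$ is gone). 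The idea is then to fix $a_2,\dots,a_{2k}\in(\FF^*)^{2k-1}$ freely and count solutions in $a_1$.

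The key step is the following dichotomy on the fixed tuple $(a_2,\dots,a_{2k})$. Write $f = a_1\cdot(\alpha_1 a_2) + g(a_2,\dots,a_{2k})$, where $g = \alpha_2 a_2a_3+\cdots+\alpha_{2k-1}a_{2k-1}a_{2k}$. Since $F$ is nonzero and $\alpha_{2k}=0$, some $\alpha_j$ with $1\le j\le 2k-1$ is nonzero; I will use that $\alpha_1\ne 0$ is \emph{not} guaranteed, so I must split further. If $\alpha_1\ne 0$: for each of the $(q-1)^{2k-1}$ choices of $(a_2,\dots,a_{2k})$, the coefficient $\alpha_1 a_2$ of $a_1$ is nonzero, so the equation $f=0$ has the unique solution $a_1 = -g/(\alpha_1 a_2)$ in $\FF$; this lies in $\FF^*$ precisely when $g\ne 0$. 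Hence $\bz(f)$ equals the number of $(a_2,\dots,a_{2k})\in(\FF^*)^{2k-1}$ with $g\ne 0$, which is at most $(q-1)^{2k-1}$ — giving the bound immediately. If instead $\alpha_1=0$, then $a_1$ (and symmetrically, by the same argument with roles of $a_2$, $\alpha_1$, $\alpha_2$ examined at the other end) does not appear in $f$ at all, so $f$ only involves $a_2,\dots,a_{2k}$; then $\bz(f) = (q-1)\cdot\bz(f')$ where $f'$ is $f$ viewed in $2k-1$ variables, and one shows $\bz(f')\le(q-1)^{2k-2}$ by the same "solve for the leftmost surviving variable" argument (there is at least one surviving nonzero coefficient $\alpha_j$, and the corresponding monomial is linear in $a_j$ for the smallest such $j$, so one free variable is pinned down). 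Either way $\bz(f)\le(q-1)^{2k-1}$.

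For the equality claim, I take $F=t_1-t_2$, so $f = a_1a_2 - a_2a_3 = a_2(a_1-a_3)$. On $(\FF^*)^{2k}$ the factor $a_2$ is never zero, so $\bz(f)$ is exactly the number of tuples with $a_1=a_3$, namely $(q-1)\cdot(q-1)^{2k-2}=(q-1)^{2k-1}$; dividing by $(q-1)^2$ yields $|Z(F)\cap X| = (q-1)^{2k-3}$, attaining the bound. The main obstacle — really a bookkeeping point rather than a genuine difficulty — is handling the case $\alpha_1=0$ cleanly: one must make sure the induction on the number of variables, or equivalently the "pick the first nonzero monomial and solve for one of its variables" step, does not overcount, which is why I phrase it as extracting a single free variable and bounding the rest trivially by $(q-1)^{(\#\text{remaining})}$. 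I will also note that the reduction to $\alpha_{2k}=0$ via cyclic symmetry is legitimate because a cyclic rotation of the edge labels of $\C_{2k}$ induces an isomorphism of $X$ (equivalently permutes the coordinates of $\PP^{2k-1}$), hence preserves $|Z(F)\cap X|$.
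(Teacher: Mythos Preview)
Your proof is correct and follows essentially the same approach as the paper's: both pull back to $f$, identify a variable that appears in at most one surviving monomial, and solve for it to get $\bz(f)\le (q-1)^{2k-1}$, then verify equality for $F=t_1-t_2$. The only difference is cosmetic: the paper skips your case split on $\alpha_1$ by directly asserting that (since some $\alpha_\ell=0$ but $F\neq 0$) there exists an index $i$ with $f=h+\beta x_ix_j$, $\beta\neq 0$, and $h$ free of $x_i$ --- exactly the ``leftmost surviving variable'' you locate after peeling off the free variables.
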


\begin{proof}
Let $f=F(x_1x_2,\dots,x_{2k-1}x_{2k},x_{2k}x_1)$. By our assumptions there exists $i$ such that 
$f=h+\beta x_ix_{j}$ with $\beta\not = 0$ and $h\in \FF[x_1,\dots,\widehat{x_i},\dots,x_{2k}]$.
This implies that if an element of $(\FF^*)^{2k}$ is a zero of $f$ then $h$ does not vanish and 
the $i$th coordinate is determined by the value of the remaining coordinates. In other words,  
$$
\bz(f) \leq (q-1)^{2k-1},
$$
which, by Lemma~\ref{lemma: zeros affine}, says that $|Z(F)\cap X| \leq (q-1)^{2k-3}$.
When $F=t_1-t_2$ we get $f=x_1x_2-x_2x_3$ and, since $x_1x_2$ does not vanish on any element of 
$(\FF^*)^{2k}$,
$$
\bz(f) = (q-1)^{2k-1} \implies |Z(F)\cap X| = (q-1)^{2k-3}. \qedhere
$$
\end{proof}

To deal with the case when all $\alpha_i$ are nonzero, we will need the next lemma.

\begin{lemma}\label{lemma: zeros of path}
Let $2\leq r\leq 2k$. If $g=\sum_{i=1}^{r-1} \beta_i x_ix_{i+1}\in \FF[x_1,\dots,x_{2k}]$ is such that  
$\beta_1,\dots,\beta_{r-1}$ are nonzero, then 
\begin{equation}\label{eq: zeros of path}
\bz(g) =  q^{-1}[(q-1)^r+(-1)^{r-1}(q-1)^2](q-1)^{2k-r}.
\end{equation}
\end{lemma}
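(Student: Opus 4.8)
The plan is to reduce the count to a short one-variable recursion. Since $g=\sum_{i=1}^{r-1}\beta_i x_i x_{i+1}$ involves only the variables $x_1,\dots,x_r$, the remaining $2k-r$ coordinates of a zero in $(\FF^*)^{2k}$ are arbitrary, so $\bz(g)=(q-1)^{2k-r}A_r$, where $A_r$ denotes the number of $(a_1,\dots,a_r)\in(\FF^*)^r$ with $\sum_{i=1}^{r-1}\beta_i a_i a_{i+1}=0$; it therefore suffices to prove $A_r=q^{-1}[(q-1)^r+(-1)^{r-1}(q-1)^2]$. First I would normalize the coefficients: setting $c_1=1$ and $c_{i+1}=1/(\beta_i c_i)$ recursively defines nonzero scalars $c_1,\dots,c_r$, and the substitution $x_i\mapsto c_i x_i$ is a bijection of $(\FF^*)^r$ carrying $\sum_{i=1}^{r-1}\beta_i x_i x_{i+1}$ to $\sum_{i=1}^{r-1} x_i x_{i+1}$; hence we may assume $\beta_i=1$ for all $i$.

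The key step is to peel off the last variable. Fix $(a_1,\dots,a_{r-1})\in(\FF^*)^{r-1}$ and set $s=\sum_{i=1}^{r-2}a_i a_{i+1}$ (the empty sum when $r=2$). The equation $\sum_{i=1}^{r-1}a_i a_{i+1}=0$ reads $a_{r-1}a_r=-s$, which, since $a_{r-1}\neq 0$, determines $a_r=-s/a_{r-1}$ uniquely, and this value lies in $\FF^*$ precisely when $s\neq 0$. Therefore $A_r$ equals the number of $(a_1,\dots,a_{r-1})\in(\FF^*)^{r-1}$ with $\sum_{i=1}^{r-2}a_i a_{i+1}\neq 0$, i.e.
$$
A_r=(q-1)^{r-1}-A_{r-1}\qquad(r\geq 3),
$$
with base case $A_2=0$, since $a_1a_2=0$ has no solution in $(\FF^*)^2$. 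Solving this first-order linear recurrence — a particular solution is $(q-1)^r/q$, the homogeneous solutions are constant multiples of $(-1)^r$, and the constant is pinned down by $A_2=0$ — gives $A_r=q^{-1}[(q-1)^r-(-1)^r(q-1)^2]$, which is the claimed value; multiplying through by $(q-1)^{2k-r}$ then yields \eqref{eq: zeros of path}.

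I do not anticipate a serious obstacle: the only points needing care are the bijectivity of the rescaling and the check of the base case $A_2=0$ against the formula, so that the recursion is correctly anchored (the indexing must be consistent with $2\le r\le 2k$). If one prefers to avoid the recursion, the same answer falls out of the additive-character expansion $A_r=q^{-1}\sum_{t\in\FF}\sum_{a\in(\FF^*)^r}\psi(t\sum_{i=1}^{r-1}a_i a_{i+1})$, where $\psi$ is a nontrivial additive character of $\FF$: the term $t=0$ contributes $(q-1)^r/q$, and for $t\neq 0$ one absorbs $t$ into the variable $a_1$ to see that the inner sum is independent of $t$ and equals, by $r-1$ applications of the identity $\sum_{a\in\FF^*}\psi(ab)=-1$ for $b\in\FF^*$, the value $(q-1)(-1)^{r-1}$, giving $A_r=q^{-1}[(q-1)^r+(-1)^{r-1}(q-1)^2]$ once more.
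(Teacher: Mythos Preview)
Your argument is correct and is essentially the same as the paper's: both peel off the last variable to obtain the recurrence $A_r=(q-1)^{r-1}-A_{r-1}$ (equivalently, $\bz(g)=(q-1)^{2k-1}-\bz(h)/(q-1)$) with base case $r=2$, and then solve it. Your preliminary normalization $\beta_i\mapsto 1$ is harmless but unnecessary, and the additive-character alternative you sketch is a genuine second route not in the paper.
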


\begin{proof}
Let us use induction on $r$. If $r=2$ then $g=\beta_1x_1x_2$, with $\beta_1\not = 0$, and  
hence \mbox{$\bz_2(g)=0$}, which agrees with the formula given. Fix $r<2k$ and assume that the statement  
holds for $r$. Given 
$$
\ts g = \sum_{i=1}^r \beta_i x_ix_{i+1}\in \FF[x_1,\dots,x_{2k}]
$$ 
with $\beta_1,\dots,\beta_r\not = 0$, let us write $g = h + \beta_{r}x_rx_{r+1}$, where 
$$
\ts h= \sum_{i=1}^{r-1} \beta_i x_ix_{i+1}.
$$
Now, an element of $(\FF^*)^{2k}$ is a zero of 
$g$ if and only if it not a zero of $h$ and in this case its $r+1$ coordinate is determined by $0=h+\beta_rx_rx_{r+1}$. 
Hence
$$
\bz(g) = (q-1)^{2k-1} - \bz(h)(q-1)^{-1},
$$
which, by induction is equal to 
$$
\renewcommand{\arraystretch}{1.3}
\begin{array}{l}
(q-1)^{2k-1} - q^{-1}[(q-1)^r+(-1)^{r-1}(q-1)^2](q-1)^{2k-r-1} \\ 
= q^{-1} [q(q-1)^{r}  - (q-1)^{r}-(-1)^{r-1}(q-1)^2](q-1)^{2k-r-1} \\
= q^{-1} [(q-1)^{r+1}+(-1)^{r}(q-1)^2](q-1)^{2k-r-1}.\qedhere
\end{array}
$$
\end{proof}

\begin{prop}\label{prop: zeros of a complete linear form}
If $F= \alpha_1t_1+\cdots+\alpha_{2k}t_{2k} \in \FF[t_1,\dots,t_s]$ is   
such that all $\alpha_i$ are nonzero, then 
$$
|Z(F)\cap X| \leq \frac{(q-1)^{2k-1}+q^k(q-2)+1}{q(q-1)},
$$
with equality if and only if $\alpha_1\alpha_3\cdots \alpha_{2k-1} = (-1)^k \alpha_2\alpha_4\cdots \alpha_{2k}$.
\end{prop}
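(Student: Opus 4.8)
The plan is to bound $\bz(f)$, where $f=F(x_1x_2,\dots,x_{2k-1}x_{2k},x_{2k}x_1)=\alpha_1x_1x_2+\alpha_2x_2x_3+\cdots+\alpha_{2k}x_{2k}x_1$, and then apply Lemma~\ref{lemma: zeros affine}. The first step is a linearization. The monomial map $\phi\colon(\FF^*)^{2k}\to(\FF^*)^{2k}$, $\phi(x)=(x_1x_2,\dots,x_{2k-1}x_{2k},x_{2k}x_1)$, is a group homomorphism which (as in Section~2.3, or by a direct check) has kernel of order $q-1$, so that all of its fibers have order $q-1$, and whose image is the subgroup $W=\set{u\in(\FF^*)^{2k}: u_1u_3\cdots u_{2k-1}=u_2u_4\cdots u_{2k}}$. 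Composing $\phi$ with the translation $u\mapsto(\alpha_1u_1,\dots,\alpha_{2k}u_{2k})$ gives a map whose fibers still have order $q-1$ and whose image is the coset
$$
V_\lambda:=\set{v\in(\FF^*)^{2k}: v_2v_4\cdots v_{2k}=\lambda\, v_1v_3\cdots v_{2k-1}},\qquad \lambda:=\frac{\alpha_2\alpha_4\cdots\alpha_{2k}}{\alpha_1\alpha_3\cdots\alpha_{2k-1}},
$$
and which sends $f$ to $v_1+\cdots+v_{2k}$. Therefore $\bz(f)=(q-1)\,N_\lambda$, where $N_\lambda=\#\set{v\in V_\lambda : v_1+\cdots+v_{2k}=0}$, and by Lemma~\ref{lemma: zeros affine} we get $|Z(F)\cap X|=\bz(f)/(q-1)^2=N_\lambda/(q-1)$.

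The second step is to compute $N_\lambda$ exactly by character sums. Writing $N_\lambda=\tfrac1{q(q-1)}\sum_{\psi,\chi}\chi(\lambda)^{-1}\,S(\psi,\chi)$, where $\psi$ runs over the additive and $\chi$ over the multiplicative characters of $\FF$, and where
$$
S(\psi,\chi)=\Bigl(\ts\sum_{v\in\FF^*}\psi(v)\chi(v)\Bigr)^{k}\Bigl(\ts\sum_{v\in\FF^*}\psi(v)\chi^{-1}(v)\Bigr)^{k}
$$
(the exponent $k$ for the even and for the odd indices, respectively), one splits the sum as follows. For $\psi$ trivial only $\chi$ trivial survives, contributing $(q-1)^{2k}$. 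For $\psi$ nontrivial and $\chi$ trivial one gets $S(\psi,\chi_0)=(-1)^{2k}=1$. For $\psi$ and $\chi$ both nontrivial, the standard Gauss-sum identity $\bigl(\sum_{v\in\FF^*}\psi(v)\chi(v)\bigr)\bigl(\sum_{v\in\FF^*}\psi(v)\chi^{-1}(v)\bigr)=\chi(-1)\,q$ gives $S(\psi,\chi)=(\chi(-1)q)^k=\chi((-1)^k)\,q^k$, independently of $\psi$; summing these over $\chi\neq\chi_0$ against $\chi(\lambda)^{-1}$ yields $q^k\sum_{\chi\neq\chi_0}\chi\bigl((-1)^k\lambda^{-1}\bigr)$, which by orthogonality equals $q^k(q-2)$ if $\lambda=(-1)^k$ and $-q^k$ otherwise. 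Adding the three contributions (the last two each taken over the $q-1$ nontrivial $\psi$) and dividing by $q(q-1)$ gives
$$
N_\lambda=\frac{(q-1)^{2k-1}+q^k(q-2)+1}{q}\ \ \text{if }\lambda=(-1)^k,\qquad N_\lambda=\frac{(q-1)^{2k-1}-q^k+1}{q}\ \ \text{otherwise.}
$$

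Since $q^k(q-2)>-q^k$, the first value exceeds the second, so $|Z(F)\cap X|=N_\lambda/(q-1)\le\frac{(q-1)^{2k-1}+q^k(q-2)+1}{q(q-1)}$, with equality precisely when $\lambda=(-1)^k$, that is, when $\alpha_1\alpha_3\cdots\alpha_{2k-1}=(-1)^k\alpha_2\alpha_4\cdots\alpha_{2k}$; this is the assertion of the proposition. I expect the Gauss-sum step to be the only delicate point: it is exactly the balance between the $k$ edges with odd index and the $k$ edges with even index in the even cycle that makes the mixed character sums collapse to $(\chi(-1)q)^k$ and hence forces $N_\lambda$ to take only two values. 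I would record the facts used there (namely $\sum_{v\in\FF^*}\psi(v)=-1$ for $\psi\neq\psi_0$, the orthogonality relations for characters, and the Gauss-sum identity above) as standard, with a textbook reference, and I would note in passing that both displayed values of $N_\lambda$ are automatically non-negative integers, being cardinalities.
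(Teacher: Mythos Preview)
Your proof is correct and takes a genuinely different route from the paper's. The paper argues by induction on $k$: writing $f=g+hx_{2k+2}$ with $g$ a ``path'' polynomial and $h$ linear, it expresses $\bz(f)$ in terms of $\bz(g)$, $\bz(h)$, and $\bz(g,h)$, invokes Lemma~\ref{lemma: zeros of path} for $\bz(g)$, and observes that restricting to $h=0$ reduces $\bz(g,h)$ to the count for a cycle of length $2k$; the induction hypothesis then gives the bound and tracks the equality condition through the substitution $x_{2k+1}\mapsto -\alpha_{2k+2}\alpha_{2k+1}^{-1}x_1$. Your argument instead first linearizes via the group homomorphism $\phi$ and a diagonal translation, turning the problem into counting solutions of $v_1+\cdots+v_{2k}=0$ on the coset $V_\lambda$, and then evaluates that count exactly by an additive--multiplicative character expansion, where the key collapse $g(\chi,\psi)g(\chi^{-1},\psi)=\chi(-1)q$ exploits the perfect balance of $k$ odd and $k$ even edges in the even cycle. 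What your approach buys is an explicit closed formula for $|Z(F)\cap X|$ in \emph{both} cases (not merely an upper bound), and it bypasses Lemma~\ref{lemma: zeros of path} and the induction entirely; what the paper's approach buys is that it stays within elementary counting and avoids Gauss sums, making it self-contained for a reader unfamiliar with character methods.
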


\begin{proof}
Let $f = F(x_1x_2,\dots,x_{2k-1}x_{2k},x_{2k}x_1)$. By Lemma~\ref{lemma: zeros affine}, we need 
to show
\begin{equation}\label{eq: number of zeros}
\bz(f)\leq q^{-1}\bigl[(q-1)^{2k}+q^k(q-2)(q-1) + q-1\bigr]
\end{equation}
with equality if and only if 
$\alpha_1\alpha_3\cdots \alpha_{2k-1} = (-1)^k \alpha_2\alpha_4\cdots \alpha_{2k}$. We will argue 
by induction on $k\geq 2$. Assume $k=2$, so that 
$$
\begin{array}{l}
f = \alpha_1x_1x_2 + \alpha_2x_2x_3+\alpha_3x_3x_4 + \alpha_4x_4x_1\\
\phantom{f} = \alpha_1x_1x_2 + \alpha_2 x_2x_3 + (\alpha_3 x_3+\alpha_4 x_1)x_4\\
\phantom{f} = g + hx_4
\end{array}
$$
where we are denoting $g=\alpha_1x_1x_2 + \alpha_2 x_2x_3$ and $h=\alpha_3 x_3+\alpha_4 x_1$.
If we let $x_1,x_2,x_3$ vary in $\FF^*$, a zero of $f$ has $x_4\in \FF^*$ determined by the values 
of $g$ and $h$, unless both quantities are zero. Thus, using Lemma~\ref{lemma: zeros of path},
$$
\renewcommand{\arraystretch}{1.3}
\begin{array}{l}
\bz(f) = \bz(g,h) + (q-1)^3 - \bz(g)(q-1)^{-1}-\bz(h)(q-1)^{-1}+\bz(g,h)(q-1)^{-1} \\
\phantom{\bz(f)} = (q-1)^{-1} q\bz(g,h)+ (q-1)^3 - \bz(g)(q-1)^{-1}-\bz(h)(q-1)^{-1}\\
\phantom{\bz(f)} = (q-1)^{-1} q\bz(g,h)+(q-1)^3- q^{-1}[(q-1)^3+(q-1)^2] -(q-1)^2\\
\phantom{\bz(f)} = (q-1)^{-1} q\bz(g,h)+(q-1)^3- 2(q-1)^2\\
\phantom{\bz(f)} = (q-1)^{-1}q\bz((\alpha_1-\alpha_2\alpha_4\alpha_3^{-1})x_1x_2, \alpha_3x_3+\alpha_4x_4) +(q-1)^3- 2(q-1)^2\\
\phantom{\bz(f)} \leq q(q-1)^2 +(q-1)^3-2(q-1)^2\\
\phantom{\bz(f)} = (q-1)^2[2q-3]
\end{array}
$$
which coincides with \eqref{eq: number of zeros} for $k=2$. Moreover, equality above holds if and only if 
$$
\alpha_1-\alpha_2\alpha_4\alpha_3^{-1} = 0 \iff \alpha_1\alpha_3 = (-1)^2 \alpha_2\alpha_4.
$$
Consider the induction step. Let $f\in \FF[x_1,\dots,x_{2k+2}]$ be 
$$
\begin{array}{l}
f = \alpha_1x_1x_2 +\cdots + \alpha_{2k}x_{2k}x_{2k+1}+ \alpha_{2k+1} x_{2k+1}x_{2k+2} + \alpha_{2k+2}x_{2k+2}x_1 \\
\phantom{f} = g + hx_{2k+2},
\end{array}
$$
where we $g=\alpha_1x_1x_2+\cdots +\alpha_{2k}x_{2k}x_{2k+1}$ and $h=\alpha_{2k+1}x_{2k+1}+\alpha_{2k+2}x_1$.
Arguing as in the case of $k=2$,
\begin{equation}\label{eq: L446}
\bz (f) = (q-1)^{-1}q\bz(g,h)+(q-1)^{2k+1}-\bz(g)(q-1)^{-1}-\bz(h)(q-1)^{-1}. 
\end{equation}
Since  
$\bz(g,h) = \bz(f',h)$, where 
$$
f' = \alpha_1x_1x_2 +\cdots + \alpha_{2k-1}x_{2k-2}x_{2k-1}-\alpha_{2k}\alpha_{2k+2}\alpha_{2k+1}^{-1}x_{2k}x_1,
$$
by induction, 
$$
\bz(g,h) \leq q^{-1}[(q-1)^{2k}+q^k(q-2)(q-1) + (q-1)](q-1)
$$
with equality if and only if 
\begin{equation}\label{eq: condition}
\renewcommand{\arraystretch}{1.3}
\begin{array}{c}
\alpha_1\alpha_3 \cdots \alpha_{2k-1} = (-1)^k \alpha_2\alpha_4 \cdots \alpha_{2k-2}[-\alpha_{2k}\alpha_{2k+2}\alpha_{2k+1}^{-1}]\\
\iff \alpha_1\alpha_3 \cdots \alpha_{2k-1} \alpha_{2k+1} = (-1)^{k+1} \alpha_2\alpha_4 \cdots \alpha_{2k-2}\alpha_{2k}\alpha_{2k+2}.
\end{array}
\end{equation}
Using this, we deduce that
$$
\renewcommand{\arraystretch}{1.3}
\begin{array}{l}
(q-1)^{-1}\bigl[q\bz(g,h)-\bz(h)\bigr]\leq (q-1)^{2k}+q^k(q-2)(q-1)+(q-1)-(q-1)^{2k}\\
\phantom{(q-1)^{-1}\bigl[q\bz(g,h)-\bz(h)\bigr]} = q^k(q-2)(q-1)+(q-1)
\end{array}
$$
Together with Lemma~\ref{lemma: zeros of path}, this yields from \eqref{eq: L446}
$$
\renewcommand{\arraystretch}{1.3}
\begin{array}{l}
\bz(f)\leq q^k(q-2)(q-1)+(q-1) + (q-1)^{2k+1}
-q^{-1}[(q-1)^{2k+1}+(q-1)^2]\\
\phantom{\bz(f)} = q^{-1}[(q-1)^{2k+2}+q^{k+1}(q-2)(q-1)+(q-1)],
\end{array}
$$
with equality if and only if \eqref{eq: condition} holds.
\end{proof}


We are now ready to prove the main result. 

\begin{proof}[Proof of Theorem~\ref{thm: the result}]
By Propositions~\ref{prop: zeros of incomplete linear form} and \ref{prop: zeros of a complete linear form}
$$
\ts \max\limits_{F\in S_1}|Z(F)\cap X| = \max\left\{(q-1)^{2k-3},\frac{(q-1)^{2k-1}+q^k(q-2)+1}{q(q-1)}\right\}.
$$
Observe that  
$$
\ts (q - 1)^{2k - 3} - \frac{(q - 1)^{2k - 1} + q^k(q - 2) + 1}{q(q - 1)} 
= \frac{(q - 1)^{2k - 2} - q^k(q - 2) - 1}{q(q - 1)}
$$
and denote the expression $(q - 1)^{2k - 2} - q^k(q - 2)$ in the 
numerator of this fraction by $\Delta(k,q)$. 
We want to show that
$$
\Delta(k,q) >0 \iff \bigl(k>5 \text{ and } q>2\bigr) \text{ or } \bigl(k=4 \text{ and } q>3 \bigr).
$$
Since 
$$
\renewcommand{\arraystretch}{1.3}
\begin{array}{l}
\Delta(2,q) = (q-1)^2-q^2(q-2)<q^2-q^2(q-2) < 0, \\
\Delta(3,q) = (q-1)^4-q^3(q-2)<q[(q-1)^3-q^2(q-2)] = q[-q^2+3q-1] < 0,
\end{array}
$$
we may reduce to $k\geq 4$. By direct computation, $\Delta(4,3) = 2^6-3^4<0$. Hence, if $k=4$ we may reduce to $q>3$. 
In this situation, $q-1\geq \frac{3}{4}q$ and this yields:
$$
\ts \Delta(4,q) = (q-1)^6-q^4(q-2) \geq \frac{3^5}{4^5}q\cdot  q^4(q-2) -q^4(q-2),
$$
the right side of which is positive if $3^5q>4^5$ which is equivalent to $q\geq 5$. Since, by direct computation, 
$\Delta(4,4) = 3^6 - 4^4\cdot 2>0$ we conclude that $\Delta(4,q)>0$ if and only if $q>3$.
Finally, assume that $k\geq 5$. Since 
$$
\renewcommand{\arraystretch}{1.4}
\begin{array}{l}
\Delta(k,q) = (q-1)^{2k-2}-q^k(q-2) \\
\phantom{\Delta(k,q)} = (q-1)^2\Delta(k-1,q) + (q-1)^2 q^{k-1}(q-2) - q^k(q-2) \\
\phantom{\Delta(k,q)} = (q-1)^2\Delta(k-1,q) + [q^2 -3q +1]q^{k-1}(q-2) \\
\end{array}
$$
and $q^2 -3q +1>0$ for $q>2$, from the positivity of $\Delta(4,q)$, for $q>3$ we get
$\Delta(k,q)>0$, for $q>3$. Finally, if $q=3$
$$
\textstyle\Delta(k,3) = 2^{2k-2}-3^k >0 \iff k>\frac{\ln(4)}{\ln(4)-\ln(3)} \iff k\geq 5. \qedhere
$$
\end{proof}

\begin{example}
Consider $G=\C_{4}$, where $k=2$. Then $C$ is a code of length $(q-1)^2$ and dimension $4$.
According to Theorem \ref{thm: the result}, its minimum distance is 
$$
\textstyle \frac{(q-1)^{4}-q^2(q-2)-1}{q(q-1)} = (q-2)^2.
$$
This agrees with the results of \cite{GR08} and \cite{SVPV11}. Indeed, since $\C_4$ is  
a complete bipartite graph $\mathcal{K}_{2,2}$, by \cite[Theorem~5.5]{GR08}, the minimum distance 
of $C$ is the square of the minimum distance of the evaluation code over $\TT^1$ of order $1$, 
which, by \cite[Theorem~3.5]{SVPV11}, is equal to $(q-2)$.
\end{example}

\subsection*{Acknowledgments}
The authors thank Manuel Gonz\'alez-Sarabia for his helpful comments.

\end{document}